\numberwithin{equation}{section}
\theoremstyle{plain}
\newtheorem{theorem}{Theorem}[section]
\newtheorem{lemma}[theorem]{Lemma}
\theoremstyle{definition}
\newtheorem{example}[theorem]{Example}
\theoremstyle{remark}
\newcommand{\n}{\zeta}
\newcommand{\te}{\theta}
\begin{document}
\setcounter{page}{1}

\title[Oscillation conditions of  half-linear delay difference equations]{Oscillation conditions of  half-linear delay difference equations of second-order}

\author[C. Jayakumar]{C. Jayakumar}

\address{C. Jayakumari\newline Department of Mathematics, Mahendra Arts \& Science College (Autonomous), Kalipatti 637501, Tamil Nadu, India}
\email{\textcolor[rgb]{0.00,0.00,0.84}{jaya100maths@gmail.com}}
\author[A. Murugesan]{A. Murugesan}

\address{A. Murugesan \newline Department of Mathematics, Government Arts College (Autonomous), Salem  636007, Tamil Nadu, India}

\email{\textcolor[rgb]{0.00,0.00,0.84}{amurugesan3@gmail.com}}

\author[V. Govindan]{Vediyappan Govindan}

\address{Vediyappan Govindan \newline Department of Mathematics, Hindustan Institute of Technology and Science, Padur 603103, Tamil Nadu,  India}

\email{\textcolor[rgb]{0.00,0.00,0.84}{govindoviya@gmail.com}}



\author[C. Park]{Choonkil Park$^*$}

\address{Choonkil Park\newline Department of Mathematics, Research Institute for Convergence of Basic Science, Hanyang University, Seoul 04763, Korea}
\email{\textcolor[rgb]{0.00,0.00,0.84}{baak@hanyang.ac.kr}}

\subjclass[2010]{Primary 39A10, 39A12.}

\keywords{oscillation;  non-oscillation;  delay;  half-linear; advanced type; second-order difference equation\\ $^*$Corresponding authors: C. Park (email: baak@hanyang.ac.kr, fax: +82-2-2281-0019, orcid: 0000-0001-6329-8228).}

\begin{abstract}
The goal of our work is to investigate  the oscillation and asymptotic properties of a class of difference equations
\begin{equation}\label{eqstar}
\Delta (r(\n)(\Delta x(\n))^{\alpha})+ q(\n) x^{\alpha}(\n-\sigma)=0; \quad \n \geq \n_o \tag{$*$}
\end{equation}
and
\begin{equation}\label{eqstars}
\Delta (r(\n)(\Delta x(\n))^{\alpha})+ q(\n) x^{\alpha}(\n-\sigma+1)=0; \quad \n \geq \n_o \tag{$**$}
\end{equation}
with the condition $\sum^{\infty} \frac{1}{r^{\frac{1}{\alpha}}(\n)}< \infty$. In contrast to most previous studies, the oscillation of the investigated equation is obtained with only one condition.
\end{abstract}
\maketitle

\baselineskip=16pt


\section{Introduction}

We investigate the oscillation properties of a class of second-order half-linear difference equations
\begin{equation}\label{e1.1}
\Delta (r(\n)(\Delta x(\n))^{\alpha})+ q(\n) x^{\alpha}(\n-\sigma)=0, \quad \n \geq \n_o,
\end{equation}
and
\begin{equation}\label{e1.1'}
\Delta (r(\n)(\Delta x(\n))^{\alpha})+ q(\n) x^{\alpha}(\n-\sigma+1)=0, \quad \n \geq \n_o, \tag{$1.1'$}
\end{equation}
where the difference operator (forward) $\Delta$ is given by $\Delta x(\n)= x(\n+1)-x(\n)$, $\sigma \geq 0$ is an integer for \eqref{e1.1} and $\sigma \geq 1$ for \eqref{e1.1'},  and $\alpha=\frac{m}{n}$ for relatively prime odd positive integers $m$ and $n$. Assume that the real sequences $\{r(\n)\}^{\infty}_{\n=\n_0}$ and $\{q(\n)\}^{\infty}_{\n=\n_0}$ meet the following requirements:
\begin{itemize}
\item[$(H_1)$] $\{r(\n)\}^{\infty}_{\n=\n_0}$ is a positive sequence;
\item[$(H_2)$] $q(\n) \geq 0$ and $q(\n) \not\equiv 0$ for infinitely many values of $\n \in \mathbb{N} (\n_0)=\{\n_0, \n_0+1, \n_0+2, \cdots\}$.
\end{itemize}

``A solution of \eqref{e1.1} is a non-trivial real sequence $\{x(\n)\}$ that is defined for $\n \geq \n_0-\sigma$ and satisfies \eqref{e1.1} for all $\n \geq \n_0$. Obviously, if $x(\n)=A_\n$ for $\n=\n_0-\sigma$, $\n_0-\sigma+1, \ldots, \n_0-1$ are given, then \eqref{e1.1} has a unique solution satisfying the above initial conditions. \eqref{e1.1} is said to be oscillatory if a solution $\{x(\n)\}$ of \eqref{e1.1} is neither eventually positive nor eventually negative, and if it is not, it is said to be non-oscillatory. If all the solutions of \eqref{e1.1} are non-oscillatory, then it is called non-oscillatory''\cite{21}.

The equation \eqref{e1.1} is said to be in canonical form if
\begin{equation}\label{e1.2}
R(\n):= \sum_{s=\n_0}^{\n-1} \frac{1}{r^{\frac{1}{\alpha}}(s)} \rightarrow \infty \quad as \quad \n \rightarrow \infty.
\end{equation}

The equation \eqref{e1.1} is in non-canonical form if
\begin{equation}\label{e1.3}
\te(\n):= \sum_{s=\n}^{\infty} \frac{1}{r^{\frac{1}{\alpha}}(s)} < \infty.
\end{equation}

In recent years, the oscillation conditions for difference equations have been examined for first and higher orders (see \cite{4, 5,   22, 9,  13, 14, 18, le,  21}). We recommend \cite{2, 1, 3, 15} and the nearly 500 sources listed therein for a general theory of oscillation of difference equations. Second-order difference equations have considered far less attention in the study than first-order difference equations, despite the fact that they occur in mathematical biology, economics, and other fields (see  \cite{5}). Many papers \cite{9, 11, 13, 14, 18}  have some recent results on second-order difference equations. On the other hand, it looks that much less is known about the oscillation properties of \eqref{e1.1}. Most of the papers \cite{5, bl, 10, 11, 12, 16,  26, zr} deal with the qualitative properties of solutions to differential and difference equations.

The structure of non-oscillatory solutions differ tremendously between non-canonical and canonical difference equations. Generally, the first order difference of a positive solution $\{x(\n)\}$ of the equation \eqref{e1.1} is of constant sign eventually, but Eq. \eqref{e1.2} assures that the solution is  eventually increasing. The equation \eqref{e1.1} has been analyzed in canonical form a number of times.

The goal of our study is to analyze the oscillation condition and asymptotic properties of the equation \eqref{e1.1} in the form of non-canonical. As a result, we'll assume that \eqref{e1.3} holds in the next sections. We review at a few key oscillation outcomes for second-order non-canonical equations that inspired us to do this research.

In \cite{29},  Zhang {\it et al.}  obtained oscillation conditions for the following equation
\begin{equation*}
\Delta ( p(\n) (\Delta x(\n))^\alpha) + q(\n+1) f(x(\n+1)) = 0, \quad \n=0,1,2,\ldots.
\end{equation*}

In \cite{19}, Li examined and established oscillatory criteria for the solutions of the following equation
\begin{equation*}
\Delta (p(\n) g(\Delta x(\n))) + q(\n+1) f(x(\n+1)) = 0, \quad \n =0,1,2,\ldots.
\end{equation*}

Saker \cite{28} investigated the following equation
\begin{equation}\label{e1.6}
\Delta (p(\n)\Delta x(\n)) + q(\n) f(x(\n-\sigma)) = 0, \quad \n = 0,1,2,3,\ldots.
\end{equation}
and established conditions for oscillation of (\ref{e1.6}).

Dinakar {\it et al.} \cite{6} established sufficient conditions   for the oscillation of the following equation
\begin{equation}\label{e1.7}
\Delta (a(\n) (\Delta x(\n)^{\alpha})) + q(\n) x^{\alpha}(\sigma (\n))=0, \quad \n \geq \n_0,
\end{equation}
and proved that every solutions of \eqref{e1.7} is oscillatory if
\begin{equation*}
\sum_{\n=\n_0}^{\infty} \left( \frac{1}{a(\n)} \sum_{s=\n_0}^{\n-1} A^{\alpha} (\sigma (s)) q(s) \right)^{\frac{1}{\alpha}}=\infty,
\end{equation*}
where $A(\n)=\sum_{s=\n}^{\infty} \frac{1}{a^{\frac{1}{\alpha}}(s)}$.

In \cite{9}, Grace and Alzabut established new oscillations criteria for non-linear second-order equation
\begin{equation*}
\Delta (a(\n)(\Delta y(\n))^{\alpha})+q(\n) x^{\gamma} (\n-m+1)+ c(\n) x^{\mu}(\n+m^{*}+1)=0,
\end{equation*}
by comparison with the oscillation results of first order difference equation.

In \cite{21}, Murugesan and  Jayakuma derived  oscillatory criteria for the difference equation in non-canonical form of advanced type
\begin{equation*}
\Delta (r(\n) (\Delta x(\n))^{\alpha}) + q(\n) x^{\alpha} (\n+\sigma)=0, \quad \n \geq \n_o
\end{equation*}

For the purpose of simplicity and generality, we can only deal with eventually positive solutions of \eqref{e1.1}. Any functional inequality is assumed to hold  eventually.

\section{Main results}

\begin{theorem}\label{thm2.1}
If
\begin{equation}\label{e2.1}
\sum_{\n=\n_0}^{\infty} \left( \frac{1}{r(\n)} \sum_{s=\n_0}^{\n-1} q(s) \right)^{\frac{1}{\alpha}} = \infty,
\end{equation}
then every solution $\{x(\n)\}$ of \eqref{e1.1} oscillates or $\lim_{\n \rightarrow \infty} x(\n)=0$.
\end{theorem}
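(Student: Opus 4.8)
The plan is to argue by contradiction, supposing $\{x(\n)\}$ is a non-oscillatory solution of \eqref{e1.1}. Because $\alpha=m/n$ is a quotient of odd integers, $t\mapsto t^{\alpha}$ is an odd, sign-preserving map, so $-x$ solves \eqref{e1.1} whenever $x$ does; hence I may assume $x(\n)>0$ and $x(\n-\sigma)>0$ for all $\n\geq\n_1$ with some $\n_1\geq\n_0$. Writing $z(\n):=r(\n)(\Delta x(\n))^{\alpha}$, equation \eqref{e1.1} becomes $\Delta z(\n)=-q(\n)x^{\alpha}(\n-\sigma)\leq 0$, so $\{z(\n)\}$ is non-increasing; being monotone it is eventually of one sign, and $(H_2)$ excludes $z\equiv 0$ eventually. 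Thus either (I) $\Delta x(\n)>0$ eventually, or (II) $\Delta x(\n)<0$ eventually. I would also record at the outset the preliminary remark that \eqref{e2.1} together with the non-canonical hypothesis \eqref{e1.3} forces $\sum_{s=\n_0}^{\infty}q(s)=\infty$: if this sum were a finite number $Q$, then $\frac{1}{r(\n)}\sum_{s=\n_0}^{\n-1}q(s)\leq Q/r(\n)$ would give $\sum_{\n}\bigl(\frac{1}{r(\n)}\sum_{s=\n_0}^{\n-1}q(s)\bigr)^{1/\alpha}\leq Q^{1/\alpha}\sum_{\n}r^{-1/\alpha}(\n)=Q^{1/\alpha}\te(\n_0)<\infty$, contradicting \eqref{e2.1}.

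In Case (I), $x$ is eventually increasing, so $x(\n-\sigma)\geq x(\n_1)=:c>0$ as soon as $\n-\sigma\geq\n_1$. Telescoping $\Delta z(s)=-q(s)x^{\alpha}(s-\sigma)$ and using $z(\n)>0$ gives $c^{\alpha}\sum_{s}q(s)\leq z(\n_1+\sigma)<\infty$, whence $\sum_{s=\n_0}^{\infty}q(s)<\infty$. This contradicts the preliminary remark, so Case (I) cannot occur.

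In Case (II), $x$ is eventually positive and decreasing, so $x(\n)\to\ell\geq 0$; the goal is to prove $\ell=0$. Suppose instead $\ell>0$. Then $x^{\alpha}(\n-\sigma)\geq\ell^{\alpha}$, so from $\Delta z(\n)\leq-\ell^{\alpha}q(\n)$ and the fact that $z(\n_2)<0$ for some $\n_2$ (beyond which $z<0$), summation yields $-z(\n)\geq\ell^{\alpha}\sum_{s=\n_2}^{\n-1}q(s)$. Since $\alpha$ is a ratio of odd integers, the identity $z(\n)=r(\n)(\Delta x(\n))^{\alpha}$ inverts to $\Delta x(\n)=-\bigl(\frac{-z(\n)}{r(\n)}\bigr)^{1/\alpha}$, and the lower bound on $-z(\n)$ then gives $\Delta x(\n)\leq-\ell\bigl(\frac{1}{r(\n)}\sum_{s=\n_2}^{\n-1}q(s)\bigr)^{1/\alpha}$. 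Telescoping from $\n_2$ to $N-1$ produces $x(N)\leq x(\n_2)-\ell\sum_{\n=\n_2}^{N-1}\bigl(\frac{1}{r(\n)}\sum_{s=\n_2}^{\n-1}q(s)\bigr)^{1/\alpha}$, and letting $N\to\infty$ forces $x(N)\to-\infty$, contradicting $x>0$; hence $\ell=0$.

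The step needing the most care — and the main obstacle — is the passage at the end of Case (II) from the inner sum $\sum_{s=\n_2}^{\n-1}q(s)$ back to the sum $\sum_{s=\n_0}^{\n-1}q(s)$ that actually appears in \eqref{e2.1}. These differ only by the fixed constant $C_0=\sum_{s=\n_0}^{\n_2-1}q(s)$, and since the preliminary remark guarantees $\sum q=\infty$, for all large $\n$ one has $\sum_{s=\n_2}^{\n-1}q(s)\geq\frac12\sum_{s=\n_0}^{\n-1}q(s)$; the monotonicity of $t\mapsto t^{1/\alpha}$ then shows the series in the telescoped inequality dominates $(1/2)^{1/\alpha}$ times the divergent series in \eqref{e2.1}, delivering the required divergence to $-\infty$. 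Combining the two cases, every non-oscillatory solution satisfies $\lim_{\n\to\infty}x(\n)=0$, which is exactly the assertion of the theorem.
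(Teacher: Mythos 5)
Your proof is correct and follows essentially the same route as the paper: the same dichotomy on the sign of $\Delta x(\n)$, the same double summation in the decreasing case to force $x(\n)\to-\infty$ when the limit $\ell$ is positive, and the same use of the fact that \eqref{e2.1} together with \eqref{e1.3} forces $\sum q(\n)=\infty$ to rule out the increasing case. The only differences are refinements in your favour: you telescope $z(\n)=r(\n)(\Delta x(\n))^{\alpha}$ directly using the bound $x(\n-\sigma)\geq c$ where the paper uses the Riccati quotient $w(\n)=z(\n)/x^{\alpha}(\n-\sigma)$, and you explicitly prove both the divergence $\sum q(\n)=\infty$ and the harmlessness of shifting the inner summation index from $\n_0$ to $\n_2$, two points the paper asserts or passes over in silence.
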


\begin{proof}
Assume to the contrary that \eqref{e1.1} has an eventually positive solution $\{x(\n)\}$.  Then either $\Delta x(\n) > 0 $ or $\Delta x(\n) < 0 $ holds eventually, say,   $\n \geq \n_1$ and $\n_1 \geq \n_0$.

If $\Delta x(\n) < 0$, then we have $\lim_{\n \rightarrow \infty}x(\n)=c \geq 0$ exists. Let $c > 0$. Summing \eqref{e1.1} from $\n_1$ to $\n-1$, we have 
\begin{equation*}
r(\n) (\Delta x(\n))^{\alpha} - r(\n_1) (\Delta x(\n_1))^{\alpha} + \sum_{s=\n_1}^{\n-1} q(s) x^{\alpha} (s-\sigma)=0,
\end{equation*}
which gives that
\begin{equation*}
 r(\n) (\Delta x(\n))^{\alpha} \le - \sum_{s=\n_1}^{\n-1} q(s) x^{\alpha} (s-\sigma)
\end{equation*}
and hence
\begin{equation*}
\Delta x(\n) \le - \frac{c}{r^{\frac{1}{\alpha}}(\n)} \left( \sum_{s=\n_1}^{\n-1} q(s) \right)^{\frac{1}{\alpha}}.
\end{equation*}
Summing the above from $\n_1$ to $\n-1$, we  have
\begin{equation*}
x(\n)-x(\n_1) \le -c \sum_{u=\n_1}^{\n-1} \frac{1}{r^{\frac{1}{\alpha}}(u)} \left( \sum_{s=\n_1}^{u-1} q(s) \right)^\frac{1}{\alpha}.
\end{equation*}
Taking $\n \rightarrow \infty$ and using \eqref{e2.1}, we see that $x(\n) \rightarrow -\infty$ as $\n \rightarrow \infty$. This is a contradiction and hence $c=0$.

Suppose that $\Delta x(\n)>0$.  Let us define a sequence $\{w(\n)\}$ given by
\begin{equation*}
w(\n):= \frac{r(\n) (\Delta x(\n))^{\alpha}}{x^{\alpha} (\n-\sigma)} > 0.
\end{equation*}
Clearly, $\{w(\n)\}$ satisfies
\begin{equation}\label{e2.2}
\Delta w(\n)= -q(\n)-  \frac{ \alpha w(\n+1) \Delta x(\n-\sigma)}{x(\n-\sigma)} \le -q(\n).
\end{equation}
Summing \eqref{e2.2} from $\n_1$ to $\n-1$, we obtain
\begin{equation*}
w(\n) \le w(\n_1)- \sum_{s=\n_1}^{\n-1} q(s).
\end{equation*}
In view of \eqref{e2.1} and \eqref{e1.3}, we get that $\{\sum_{s=\n_1}^{\n-1} q(s)\}$ is an unbounded sequence, which is a contradiction. Hence the case $\Delta x(\n)>0$ is not possible and hence we complete  the proof.
\end{proof}

\begin{theorem}\label{thm2.2}
If one of the following conditions
\begin{equation}\label{e2.3}
\sum_{s=\n_0}^{\infty} \left( \frac{1}{r(\n)} \sum_{s=\n_1}^{\n-1} q(s) \te^{\alpha} (s-\sigma) \right)^{\frac{1}{\alpha}}= \infty
\end{equation}
and 
\begin{equation}\label{e2.4}
\sum_{s=\n_0}^{\infty} q(s) \te^{\alpha+1}(s+1) = \infty
\end{equation}
holds, then \eqref{e1.1} oscillates.
\end{theorem}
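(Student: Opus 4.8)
The plan is to argue by contradiction: assume $\{x(\n)\}$ is an eventually positive solution and show that neither a decreasing nor an increasing solution survives either hypothesis. First I set $y(\n):=r(\n)(\Delta x(\n))^{\alpha}$; since $\Delta y(\n)=-q(\n)x^{\alpha}(\n-\sigma)\le 0$ and $t\mapsto t^{\alpha}$ preserves sign (as $m,n$ are odd), $y$ is eventually of one sign, so eventually either $\Delta x(\n)>0$ or $\Delta x(\n)<0$. The key preliminary observation is that, because $\te$ is non-increasing, \emph{both} \eqref{e2.3} and \eqref{e2.4} force $\sum q(s)=\infty$: in \eqref{e2.4} one bounds $\te^{\alpha+1}(s+1)\le\te^{\alpha+1}(\n_1+1)$, while in \eqref{e2.3} a convergent $\sum q(s)\te^{\alpha}(s-\sigma)$ would make the outer summand $\le C\,r^{-1/\alpha}(\n)$, which is summable by \eqref{e1.3} --- a contradiction. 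With $\sum q=\infty$ in hand, the increasing case is excluded exactly as in Theorem \ref{thm2.1}: the Riccati sequence $w(\n)=y(\n)/x^{\alpha}(\n-\sigma)>0$ obeys $\Delta w(\n)\le -q(\n)$, so summation drives $w$ to $-\infty$.

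It remains to exclude the decreasing case $\Delta x(\n)<0$, and here the two hypotheses are handled by separate estimates resting on one common lemma. Since $-y$ is positive and non-decreasing, telescoping $-\Delta x(s)=(-y(s))^{1/\alpha}r^{-1/\alpha}(s)$ from $\n$ to $\infty$ yields the lower bound $x(\n)\ge(-y(\n))^{1/\alpha}\te(\n)$; in particular $x(\n-\sigma)\ge b^{1/\alpha}\te(\n-\sigma)$ with $b:=-y(\n_1)>0$. Under \eqref{e2.3} I would substitute this into the summed equation to get $-y(\n)\ge b\sum_{s=\n_1}^{\n-1}q(s)\te^{\alpha}(s-\sigma)$, extract $-\Delta x(\n)\ge b^{1/\alpha}\big(\frac{1}{r(\n)}\sum_{s=\n_1}^{\n-1}q(s)\te^{\alpha}(s-\sigma)\big)^{1/\alpha}$, and sum once more; the left side telescopes to at most $x(\n_1)$ while the right side diverges by \eqref{e2.3}, the desired contradiction.

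Under \eqref{e2.4} the argument is sharper. Writing $a(s):=-y(s)$ and using $x(s-\sigma)\ge x(s+1)\ge a(s+1)^{1/\alpha}\te(s+1)$, the equation gives $q(s)\te^{\alpha+1}(s+1)\le \big(1-a(s)/a(s+1)\big)\te(s+1)\le \te(s+1)\,\Delta(\ln a)(s)$, via $1-t\le\ln(1/t)$. A summation by parts then reduces the tail of $\sum q\,\te^{\alpha+1}$ to a boundary term plus $\sum \ln a(s)\,r^{-1/\alpha}(s)$. I expect this last sum to be the main obstacle: $a(s)$ may tend to infinity, so $\ln a$ is unbounded and one cannot merely invoke $\sum r^{-1/\alpha}<\infty$. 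The resolution is the a priori bound $a(s)\te^{\alpha}(s)\le x^{\alpha}(\n_1)$ coming from the lower bound on $x$, which yields $\ln a(s)\le C+\alpha\ln(1/\te(s))$; since $r^{-1/\alpha}(s)=\te(s)-\te(s+1)$, the series $\sum \ln(1/\te(s))\,(\te(s)-\te(s+1))$ is dominated by the convergent integral $\int_0^{\te(\n_1)}\ln(1/u)\,du$, and the boundary term $\te(\n)\ln a(\n)\to0$ because $u\ln(1/u)\to0$. Hence $\sum q\,\te^{\alpha+1}(s+1)<\infty$, contradicting \eqref{e2.4}, and the proof concludes.
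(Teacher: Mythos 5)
Your proposal is correct, and for the crucial case it takes a genuinely different route from the paper. Your treatment of the increasing case and of the decreasing case under \eqref{e2.3} coincides with the paper's (the Riccati sequence with $\Delta w(\n)\le -q(\n)$, and the telescoped lower bound $x(\n)\ge(-r(\n)(\Delta x(\n))^{\alpha})^{1/\alpha}\te(\n)$ fed back into the summed equation, respectively); in fact your explicit observation that \emph{both} \eqref{e2.3} and \eqref{e2.4} force $\sum q(s)=\infty$ fills in a step the paper leaves implicit when it says ``repeat the same procedure as in Theorem \ref{thm2.1}'', since the procedure of Theorem \ref{thm2.1} uses \eqref{e2.1} rather than the present hypotheses. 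The real divergence is the decreasing case under \eqref{e2.4}: the paper sets $u(\n)=r(\n)(\Delta x(\n))^{\alpha}/x^{\alpha}(\n-\sigma)$, derives the Riccati inequality \eqref{e2.11}, multiplies by the weight $\te^{\alpha+1}(\n)$, sums by parts to get \eqref{e2.12}, and kills the two residual sums using $-1\le\te(\n)u^{1/\alpha}(\n)<0$ together with $\sum r^{-1/\alpha}(s)<\infty$; you instead work directly with $a(s)=-r(s)(\Delta x(s))^{\alpha}$, convert the equation into $q(s)\te^{\alpha+1}(s+1)\le\te(s+1)\Delta(\ln a)(s)$ via $1-t\le\ln(1/t)$, and sum by parts, controlling $\sum\ln a(s)\,r^{-1/\alpha}(s)$ and the boundary term through the a priori bound $a(s)\te^{\alpha}(s)\le x^{\alpha}(\n_1)$ and the convergence of $\int_0^{c}\ln(1/u)\,du$. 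Both arguments rest on the same a priori estimate (your $a(s)\te^{\alpha}(s)\le x^{\alpha}(\n_1)$ is exactly the paper's $-1\le\te(\n)u^{1/\alpha}(\n)<0$ in disguise), but your version avoids fractional powers of the Riccati variable and the delicate sign bookkeeping in \eqref{e2.12}, at the cost of the extra logarithmic-integral estimate; the paper's weighted-Riccati computation is the more standard template and is the one that generalizes routinely to related half-linear equations.
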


\begin{proof}
On the contrary, let us suppose that \eqref{e1.1} has an eventually positive solution $\{x(\n)\}$. Then, either $\Delta x(\n) >0$ or $\Delta x(\n) <0$ eventually,  $\n \geq \n_1$, $\n_1 \geq \n_0$.

\noindent \textit{Case I:}

 Assume that $\Delta x(\n)<0$ and \eqref{e2.3} holds. Now, for $l>\n$, we have
\begin{align*}
x(l)-x(\n) &= \sum_{s=\n}^{l-1} \Delta x(s)\\
          &\le \left( r(\n)( \Delta x(\n))^{\alpha} \right)^{\frac{1}{\alpha}} \sum_{s=\n}^{l-1} \frac{1}{r^{ \frac{1}{\alpha}}(s)}.
\end{align*}
Thus, as $\n \rightarrow \infty$, we obtain
\begin{equation}\label{e2.5}
-x(\n) \le \left( r(\n_1)(\Delta x(\n_1))^{\alpha} \right)^{\frac{1}{\alpha}} \te(\n).
\end{equation}
Using \eqref{e2.5} in \eqref{e1.1}, we have
\begin{equation*}
\Delta (r(\n)(\Delta x(\n))^{\alpha}) \le L^{\alpha} q(s) \te^{\alpha} (\n-\sigma),
\end{equation*}
where $L= \left( r(\n_1) (\Delta x(\n_1))^{\alpha} \right)^{\frac{1}{\alpha}} <0.$
Summing  the above from $\n_1$ to $\n-1$, we obtain
\begin{align*}
r(\n)(\Delta x(\n_1))^{\alpha} &\le r(\n_1) \Delta x(\n_1)+L^{\alpha} \sum_{s=\n_1}^{\n-1} q(s)\te^{\alpha}(s-\sigma)\\
&\le L^{\alpha} \sum_{s=\n_1}^{\n-1} q(s)\te^{\alpha} (s-\sigma).
\end{align*}
It follows from the last inequality that 
\begin{equation*}
x(\n)-x(\n_1) \le L \left( \sum_{u=\n_1}^{\n-1} \frac{1}{r(u)} \sum_{s=\n_1}^{u-1} q(s) \te^{\alpha} (s-\sigma)\right)^{\frac{1}{\alpha}}.
\end{equation*}
By using \eqref{e2.3}, we see that $\lim_{\n \rightarrow} x(\n)= - \infty$ which is a contradiction on the assumption of $\{x(\n)\}$.

\noindent \textit{Case II:}
Assume that $\Delta x(\n)<0$ and \eqref{e2.4} holds.
Set
\begin{equation}\label{e2.8}
u(\n)= \frac{r(\n)(\Delta x(\n))^{\alpha}}{x^{\alpha}(\n-\sigma)}.
\end{equation}
Obviously, $u(\n)<0$, $\n \geq \n_1$. Since $\{r(\n)(\Delta x(\n))^{\alpha}\}$ is nonincreasing, we obtain 
\begin{equation*}
r(s)(\Delta x(s))^{\alpha} \le r(\n) (\Delta x(\n))^{\alpha},\quad s \geq \n \geq \n_1.
\end{equation*}
Summing  the above from $\n$ to $l-1$, we have 
\begin{equation*}
x(l) \le x(\n) + r^{\frac{1}{\alpha}} (\n) \Delta x(\n) \sum_{s=\n}^{l-1} \frac{1}{r^{\frac{1}{\alpha}}(s)}.
\end{equation*}
Letting $l \rightarrow \infty$ in the above inequality, we have
\begin{equation*}
0 \le x(\n) + r^{\frac{1}{\alpha}} (\n) \Delta x(\n) \te(\n), \quad \n \geq \n_1.
\end{equation*}
So we get
\begin{equation*}
\frac{r^{\frac{1}{\alpha}} (\n) \Delta x(\n)}{x(\n)} \te(\n) \geq -1,\quad \n \geq \n_1
\end{equation*}
or
\begin{equation*}
-1 \le \te(\n) u^{\frac{1}{\alpha}}(\n) <0, \quad \n \geq \n-1.
\end{equation*}
From \eqref{e2.8}, we have
\begin{equation}\label{e2.11}
\Delta u(\n)+ q(\n)+ \frac{ \alpha u^{\frac{1+\alpha}{\alpha}} (\n)}{r^{\frac{1}{\alpha}}(\n-\sigma)} \le 0, \quad \n \geq \n_1.
\end{equation}
Multiplying  \eqref{e2.11} by $\te^{\alpha+1} (\n)$ and summing from $\n_1$ to $l-1$, we have 
\begin{multline}\label{e2.12}
\te^{\alpha+1}(l) u(l)- \te^{\alpha+1} (\n_1) u(\n_1) + \sum_{s=\n_1}^{l-1} q(s) \te^{\alpha+1} (s+1)\\
 - (\alpha+1) \sum_{s=\n_1}^{l-1} \frac{u(s) \te^{\alpha} (s)}{r^{\frac{1}{\alpha}} (s)} + \alpha \sum_{s=\n_1}^{l-1} \frac{\te^{\alpha+1} (s+1) u^{\frac{\alpha+1}{\alpha}}(s)}{r^{\frac{1}{\alpha}}(s-\sigma)} \le 0.
\end{multline}
Now
\begin{equation*}
\left| \sum_{s=\n_1}^{\infty} \frac{\te^{\alpha} (s) u(s)}{r^{\frac{1}{\alpha}} (s)} \right| \le \sum_{s=\n_1}^{\infty} \frac{1}{r^{\frac{1}{\alpha}}(s)} |\te^{\alpha} (s) u(s)|
 \le \sum_{s=\n_1}^{\infty} \frac{1}{r^{\frac{1}{\alpha}} (s)} < \infty
\end{equation*}
and
\begin{equation*}
\left| \sum_{s=\n_1}^{\infty} \frac{\te^{\alpha+1}(s+1) u^{\frac{\alpha+1}{\alpha}} (s)}{r^{\frac{1}{\alpha}} (s-\sigma)} \right|  \le \sum_{s=\n_1}^{\infty} \frac{1}{r^{\frac{1}{\alpha}}(s-\sigma)} \left| \te^{\alpha}(s) u(s) \right|^{\frac{\alpha+1}{\alpha}} < \sum_{s=\n_1}^{\infty} \frac{1}{r^{\frac{1}{\alpha}}(s-\sigma)} < \infty.
\end{equation*}
By applying the above inequalities, letting $l \rightarrow \infty$ in \eqref{e2.12}, we see that
\begin{equation*}
\sum_{s=\n_1}^{\infty} q(s) \te^{\alpha+1}(s+1) < \infty,
\end{equation*}
which contradicts  \eqref{e2.4}.

So in either case $\Delta x(\n)< 0$ is impossible.

If $\Delta x(\n) >0$, then we repeat the same procedure as in the proof of Theorem \ref{thm2.1} and so we  omitt the proof.
\end{proof}

\begin{lemma} \label{lem2.1} \cite{6}
Suppose that
\begin{equation}\label{e2.15}
\sum_{\n=\n_0}^{\infty} q(\n)= \infty.
\end{equation}
If $\{x(\n)\}^{\infty}_{\n=\n_0}$ is an eventually positive solution of \eqref{e1.1}, then $\Delta x(\n) < 0$.
\end{lemma}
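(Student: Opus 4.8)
The plan is to reuse the sign analysis already carried out in the proof of Theorem~\ref{thm2.1}. First I would fix $\n_1 \ge \n_0$ so that $x(\n) > 0$ and $x(\n-\sigma) > 0$ for all $\n \ge \n_1$. Since $q(\n) \ge 0$ by $(H_2)$ and $x^{\alpha}(\n-\sigma) > 0$ (here $\alpha = \frac{m}{n}$ with $m,n$ odd keeps $t \mapsto t^{\alpha}$ sign-preserving and invertible), equation \eqref{e1.1} gives
\begin{equation*}
\Delta\bigl(r(\n)(\Delta x(\n))^{\alpha}\bigr) = -q(\n)\, x^{\alpha}(\n-\sigma) \le 0,
\end{equation*}
so $\{r(\n)(\Delta x(\n))^{\alpha}\}$ is eventually nonincreasing. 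Because $r(\n) > 0$ and the $\alpha$-power is strictly monotone, $\{\Delta x(\n)\}$ is eventually of one constant sign; thus exactly one of $\Delta x(\n) > 0$ or $\Delta x(\n) < 0$ can persist, the degenerate tail $\Delta x \equiv 0$ being excluded since it would force $q \equiv 0$ eventually through the equation, contradicting \eqref{e2.15}.

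It then remains only to rule out $\Delta x(\n) > 0$. For this I would borrow the Riccati substitution from Theorem~\ref{thm2.1}: set
\begin{equation*}
w(\n) := \frac{r(\n)(\Delta x(\n))^{\alpha}}{x^{\alpha}(\n-\sigma)} > 0,
\end{equation*}
which, by the same computation leading to \eqref{e2.2}, satisfies $\Delta w(\n) \le -q(\n)$. Summing this inequality from $\n_1$ to $\n-1$ yields
\begin{equation*}
w(\n) \le w(\n_1) - \sum_{s=\n_1}^{\n-1} q(s).
\end{equation*}
Letting $\n \to \infty$ and invoking the hypothesis \eqref{e2.15}, the right-hand side tends to $-\infty$, which contradicts $w(\n) > 0$. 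Hence $\Delta x(\n) > 0$ is impossible, and the dichotomy forces $\Delta x(\n) < 0$ eventually.

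I expect the proof to be short, because both ingredients are essentially already in place: the monotonicity of $\{r(\n)(\Delta x(\n))^{\alpha}\}$ is immediate from $(H_2)$, and the Riccati estimate $\Delta w \le -q$ can simply be quoted from \eqref{e2.2}. The one genuinely delicate point is the first step, namely justifying rigorously that $\Delta x$ cannot oscillate in sign and that the flat case $\Delta x \equiv 0$ is excluded; once that monotonicity/sign argument is secured, the divergence hypothesis \eqref{e2.15} closes the $\Delta x(\n) > 0$ branch at once. Since the statement is attributed to \cite{6}, the cleanest presentation is to extract precisely the $\Delta x(\n) > 0$ branch of the argument used for Theorem~\ref{thm2.1}.
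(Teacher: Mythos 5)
The paper gives no proof of this lemma at all --- it is imported from \cite{6} --- so there is no internal proof to compare against; judged on its own, your argument is correct. It is also precisely the argument the paper's own machinery suggests: the sign dichotomy coming from the fact that $\{r(\n)(\Delta x(\n))^{\alpha}\}$ is nonincreasing (with the degenerate flat tail excluded by $q\not\equiv 0$), followed by the Riccati estimate from the proof of Theorem~\ref{thm2.1}, where your hypothesis \eqref{e2.15} replaces the combination of \eqref{e2.1} and \eqref{e1.3} used there to make $\sum_{s=\n_1}^{\n-1}q(s)$ unbounded. One caveat: the paper's \eqref{e2.2} is written as an exact identity, which is questionable in the discrete setting, so rather than quoting it you should verify the only fact you need, namely $\Delta w(\n)\le -q(\n)$ when $\Delta x(\n)>0$; this follows cleanly by writing $r(\n)(\Delta x(\n))^{\alpha}=r(\n+1)(\Delta x(\n+1))^{\alpha}+q(\n)x^{\alpha}(\n-\sigma)$ from \eqref{e1.1}, which gives
\begin{equation*}
\Delta w(\n)= -q(\n)+ r(\n+1)(\Delta x(\n+1))^{\alpha}\left( \frac{1}{x^{\alpha}(\n+1-\sigma)}-\frac{1}{x^{\alpha}(\n-\sigma)} \right) \le -q(\n),
\end{equation*}
since $x^{\alpha}(\n-\sigma)$ is nondecreasing in this case. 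With that substitution your proof is complete and self-contained.
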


\begin{theorem}\label{thm2.3}
If
\begin{equation}\label{e2.16}
\limsup_{\n \rightarrow \infty} \left( \te^{\alpha}(\n) \sum_{s=\n_1}^{\n-1} q(s) \right) >1,
\end{equation}
then \eqref{e1.1} oscillates.
\end{theorem}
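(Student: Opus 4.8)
The plan is to argue by contradiction, using Lemma \ref{lem2.1} to discard the increasing case immediately so that a single condition suffices. Suppose $\{x(\n)\}$ is an eventually positive solution. First I would note that \eqref{e2.16} forces $\sum_{\n=\n_0}^{\infty}q(\n)=\infty$: since \eqref{e1.3} makes $\te(\n)$ the tail of a convergent series we have $\te(\n)\to 0$, so if $\sum q$ were finite the partial sums $\sum_{s=\n_1}^{\n-1}q(s)$ would stay bounded and the product $\te^{\alpha}(\n)\sum_{s=\n_1}^{\n-1}q(s)$ would tend to $0$, contradicting \eqref{e2.16}. Hence \eqref{e2.15} holds, and Lemma \ref{lem2.1} yields $\Delta x(\n)<0$ for $\n\geq\n_1$; in particular $\{x(\n)\}$ is positive and nonincreasing, and the case $\Delta x(\n)>0$ need not be treated.

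Next I would record a lower bound for $x(\n)$. Since $\Delta(r(\n)(\Delta x(\n))^{\alpha})=-q(\n)x^{\alpha}(\n-\sigma)\leq 0$, the sequence $\{r(\n)(\Delta x(\n))^{\alpha}\}$ is nonincreasing, so exactly as in Case~II of Theorem~\ref{thm2.2} (summing $r(s)(\Delta x(s))^{\alpha}\leq r(\n)(\Delta x(\n))^{\alpha}$ from $\n$ to $l-1$ and letting $l\to\infty$) one obtains
\[
x(\n)+r^{\frac{1}{\alpha}}(\n)\,\Delta x(\n)\,\te(\n)\geq 0,\qquad \n\geq\n_1.
\]
Writing $y(\n):=r^{\frac{1}{\alpha}}(\n)\Delta x(\n)<0$, so that $y^{\alpha}(\n)=r(\n)(\Delta x(\n))^{\alpha}<0$ (here the hypothesis that $\alpha$ is a quotient of odd integers is what keeps the signs consistent), this reads $x(\n)\geq -y(\n)\te(\n)$, and therefore $x^{\alpha}(\n)\geq -y^{\alpha}(\n)\te^{\alpha}(\n)$ because $(-y(\n))^{\alpha}=-y^{\alpha}(\n)$.

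Then I would sum \eqref{e1.1} from $\n_1$ to $\n-1$ to obtain
\[
y^{\alpha}(\n)=r(\n_1)(\Delta x(\n_1))^{\alpha}-\sum_{s=\n_1}^{\n-1}q(s)x^{\alpha}(s-\sigma)\leq -\sum_{s=\n_1}^{\n-1}q(s)x^{\alpha}(s-\sigma),
\]
and use that $\{x(\n)\}$ is nonincreasing: for $\n_1\leq s\leq \n-1$ we have $s-\sigma\leq \n$, so $x^{\alpha}(s-\sigma)\geq x^{\alpha}(\n)$ (the finitely many indices with $s-\sigma<\n_1$ only enlarge the nonnegative sum and do not affect the conclusion). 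This gives $y^{\alpha}(\n)\leq -x^{\alpha}(\n)\sum_{s=\n_1}^{\n-1}q(s)$. Substituting $x^{\alpha}(\n)\geq -y^{\alpha}(\n)\te^{\alpha}(\n)$ and keeping track of the sign of $\sum q\geq 0$ produces
\[
y^{\alpha}(\n)\leq y^{\alpha}(\n)\,\te^{\alpha}(\n)\sum_{s=\n_1}^{\n-1}q(s),
\]
and dividing by $y^{\alpha}(\n)<0$ reverses the inequality to give $\te^{\alpha}(\n)\sum_{s=\n_1}^{\n-1}q(s)\leq 1$ for all large $\n$. Taking $\limsup$ contradicts \eqref{e2.16}, which completes the proof.

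The step I expect to demand the most care is the sign bookkeeping: the quantities $\Delta x(\n)$, $y(\n)$, and $y^{\alpha}(\n)$ are all negative, and the proof performs two operations against negative factors—multiplying the lower bound by $-\sum q\leq 0$ and dividing the final estimate by $y^{\alpha}(\n)<0$—each of which reverses an inequality, so the directions must be tracked scrupulously, and the identity $(-y(\n))^{\alpha}=-y^{\alpha}(\n)$ must be justified from $\alpha=m/n$ with $m,n$ odd. A secondary point worth stating explicitly is the reduction to the decreasing case: it is \eqref{e2.16} itself, through $\te(\n)\to 0$, that guarantees $\sum q=\infty$ and thereby licenses Lemma \ref{lem2.1}, and this is precisely what allows oscillation to be deduced from the one condition \eqref{e2.16}.
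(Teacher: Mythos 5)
Your proof is correct and follows essentially the same route as the paper's: the same tail estimate $x(\n) \geq -\te(\n) r^{\frac{1}{\alpha}}(\n)\Delta x(\n)$ obtained from the monotonicity of $\{r(\n)(\Delta x(\n))^{\alpha}\}$, the same summation of \eqref{e1.1} combined with the monotonicity of $\{x(\n)\}$, and the same final division by the negative quantity $r(\n)(\Delta x(\n))^{\alpha}$ to force $\te^{\alpha}(\n)\sum_{s=\n_1}^{\n-1} q(s) \leq 1$, contradicting \eqref{e2.16}. The only difference is organizational: you dispose of the increasing case up front by noting that \eqref{e2.16} together with $\te(\n)\to 0$ yields \eqref{e2.15} and then invoking Lemma \ref{lem2.1}, whereas the paper makes exactly this argument at the end of its proof (where it cites Lemma \ref{lem2.3}, apparently a typo for Lemma \ref{lem2.1}).
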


\begin{proof}
Let us assume on the contrary, without loss of generality, that \eqref{e1.1} has an eventually positive solution $\{x(\n)\}$. Then, either $\Delta x(\n) <0$ or $\Delta x(\n) >0$ eventually, for $\n \geq \n_1$, $\n_1 \geq \n_0$.
First now, we suppose that $\Delta x(\n) <0$. Since $ \{r^{\frac{1}{\alpha}}(\n) \Delta x(\n) \}$ is decreasing, we get
\begin{align}\label{e2.17}
x(\n-\sigma) \geq x(\n) \geq &\sum_{s=\n}^{\infty} \frac{1}{r^{\frac{1}{\alpha}}(s)} \left( -r^{\frac{1}{\alpha}}(s) \Delta x(s) \right)\nonumber\\ &\geq -\te(\n) r^{\frac{1}{\alpha}} (\n) \Delta x(\n) \geq 0.
\end{align}
Summing \eqref{e1.1} from $\n_1$ to $\n-1$ and using \eqref{e2.17}, we obtain
\begin{align*}
-r(\n) (\Delta x(\n))^{\alpha} \geq &\sum_{s=\n_1}^{\n-1} q(s) x^{\alpha} (s-\sigma)\nonumber\\ &\geq x^{\alpha} (\n-\sigma) \sum_{s=\n_1}^{\n-1} q(s) \nonumber\\ &\geq -r(\n) (\Delta x(\n))^{\alpha} \te^{\alpha}(\n) \sum_{s=\n_1}^{\n-1} q(s).
\end{align*}
Taking the limsup,  we have  a contradiction to  \eqref{e2.16}.

Now suppose that $\Delta x(\n)>0$. Then by \eqref{e1.3} and \eqref{e2.16}, we have \eqref{e2.15} holds. Then by  Lemma \ref{lem2.3}, we have a contradiction and the proof is now complete.
\end{proof}

The comparison with the second-order linear delay difference equation in canonical form
\begin{equation}\label{e2.21}
\Delta (\widetilde r(\n) \Delta x(\n-1))+\widetilde q(\n) x(\n-\sigma)=0
\end{equation}
yields the following oscillation criterion for \eqref{e1.1'}.

\begin{lemma} \label{lem2.3} \cite{22} 
If the difference inequality
\begin{equation*}
\Delta (\widetilde r(\n) \Delta x(\n-1))+\widetilde q(\n) x(\n-\sigma) \le 0
\end{equation*}
has an eventually positive solution, then Eq. \eqref{e2.21} has an eventually positive solution.
\end{lemma}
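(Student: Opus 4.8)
The plan is to realize an eventually positive solution of the equation \eqref{e2.21} as a fixed point of a monotone summation operator, using the given supersolution as an upper barrier. Write
\[
L[x](\n):=\Delta(\widetilde r(\n)\Delta x(\n-1))+\widetilde q(\n)\,x(\n-\sigma),
\]
and let $\{y(\n)\}$ be a solution of $L[y](\n)\le 0$ with $y(\n)>0$ for $\n\ge N$. The idea is to convert $L[x]=0$ into an equivalent double-summation equation $x=Tx$, to show that $y$ is a supersolution for $T$ (that is, $Ty\le y$), and then to produce a fixed point $x^{\ast}$ with $0<x^{\ast}\le y$ by successive approximations (equivalently, by the Knaster--Tarski theorem on the lattice of sequences lying between a positive constant and $y$).

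First I would extract the structural consequences of $L[y]\le 0$. Since $\widetilde q(\n)\ge 0$ and $y(\n-\sigma)>0$, the estimate $\Delta(\widetilde r(\n)\Delta y(\n-1))\le -\widetilde q(\n)y(\n-\sigma)\le 0$ shows that $\{\widetilde r(\n)\Delta y(\n-1)\}$ is eventually nonincreasing. Because \eqref{e2.21} is in canonical form, $\sum^{\infty}\frac{1}{\widetilde r(\n)}=\infty$, so a negative value of $\widetilde r(\n)\Delta y(\n-1)$ would force $y(\n)\to-\infty$ upon summation, contradicting positivity; hence $\widetilde r(\n)\Delta y(\n-1)$ stays nonnegative and tends to a finite limit $\ell\ge 0$. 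Summing $L[y]\le 0$ from $\n$ to $\infty$ then gives both the convergence of $\sum^{\infty}\widetilde q(s)y(s-\sigma)$ and the supersolution inequality
\[
\widetilde r(\n)\Delta y(\n-1)\ \ge\ \ell+\sum_{s=\n}^{\infty}\widetilde q(s)\,y(s-\sigma)\ \ge\ \sum_{s=\n}^{\infty}\widetilde q(s)\,y(s-\sigma),
\]
which, after one more summation, yields $y(\n)\ge (Ty)(\n)$ for the operator $T$ defined below.

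Next I would fix $x(\n)=y(\n)$ on the initial block $N-\sigma\le \n\le N$ (so the delayed argument is always defined and the data are strictly positive) and set, for $\n\ge N$,
\[
(Tx)(\n):=y(N)+\sum_{u=N+1}^{\n}\frac{1}{\widetilde r(u)}\sum_{s=u}^{\infty}\widetilde q(s)\,x(s-\sigma),
\]
noting that a sequence with $\widetilde r(\n)\Delta x(\n-1)=\sum_{s=\n}^{\infty}\widetilde q(s)x(s-\sigma)$ satisfies $L[x]=0$ by telescoping. Because $\widetilde q\ge 0$, the operator $T$ is order-preserving; the supersolution inequality above gives $Ty\le y$, while $(Tx)(\n)\ge y(N)>0$ shows that $T$ maps the order interval of sequences with $y(N)\le x\le y$ into itself. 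I would then run the iteration $x_0:=y$, $x_{k+1}:=Tx_k$, which is pointwise nonincreasing (by monotonicity and $x_1=Ty\le y=x_0$) and bounded below by $y(N)$, hence convergent to some $x^{\ast}$; passing to the limit inside the sums gives $x^{\ast}=Tx^{\ast}$, and reversing the two summations recovers $L[x^{\ast}]=0$, with $x^{\ast}(\n)\ge y(N)>0$ confirming eventual positivity.

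The main obstacle is analytic rather than structural: one must guarantee that the inner series $\sum_{s=u}^{\infty}\widetilde q(s)x_k(s-\sigma)$ converge and may be manipulated freely. This is where the finiteness of $\sum^{\infty}\widetilde q(s)y(s-\sigma)$ from the second step is essential, since it dominates every iterate ($x_k\le y$) and thereby justifies, by a discrete dominated-convergence argument, interchanging $\lim_k$ with the two nested sums as well as the telescoping that produces $L[x^{\ast}]=0$. By contrast, the danger of the iteration collapsing to the trivial solution is automatically avoided: the constant $y(N)$ is built into $T$ and reproduced by it, so each iterate—and hence the limit—is bounded below by the strictly positive value $y(N)$, which is precisely where the prescribed positive data on the delay block of length $\sigma+1$ enter the argument.
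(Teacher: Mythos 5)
Your proof is correct. Note, however, that the paper itself offers no proof of this lemma at all: it is imported verbatim from El-Morshedy and Grace \cite{22}, so there is no internal argument to compare against. What you have reconstructed is essentially the standard comparison-theorem argument that lies behind such results in the literature: rewrite the equation as the fixed-point problem $x=Tx$ for the double-summation operator anchored at infinity, use the positive solution $y$ of the inequality as an upper barrier ($Ty\le y$), and run a monotone iteration downward from $y$. Your treatment of the two delicate points is sound: (i) the canonical-form hypothesis $\sum^{\infty}1/\widetilde r(\n)=\infty$ (which the paper does assume for \eqref{e2.21}, and which holds for the $\widetilde r$ used in Theorem \ref{thm2.5}, since $\sum_{\n}1/\widetilde r(\n)$ telescopes to $1/\te(\n)\to\infty$) is exactly what forces $\widetilde r(\n)\Delta y(\n-1)\ge 0$ eventually, which in turn makes $y$ eventually nondecreasing so that the order interval $[y(N),y]$ is nonempty and $T$-invariant; and (ii) the convergence of $\sum^{\infty}\widetilde q(s)y(s-\sigma)$, extracted by summing the inequality to infinity, is what dominates every iterate and justifies passing to the limit inside the nested sums. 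One stylistic remark: the limit $\ell\ge 0$ of $\widetilde r(\n)\Delta y(\n-1)$ plays no essential role, since the bound $\widetilde r(\n)\Delta y(\n-1)\ge\sum_{s=\n}^{\infty}\widetilde q(s)y(s-\sigma)$ is all you use; you could drop it and simply keep the partial-sum bound $\sum_{s=\n}^{M}\widetilde q(s)y(s-\sigma)\le\widetilde r(\n)\Delta y(\n-1)$ before letting $M\to\infty$.
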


\begin{lemma}\label{lem2.2}
If $\alpha \geq 1$ and  $\{x(\n)\}$ is an eventually positive solution of \eqref{e1.1'}, then
\begin{equation} \label{e2.19}
\left( \frac{r^{\frac{1}{\alpha}} (\n) \Delta x(\n)}{x(\n-\sigma+1)} \right)^{\alpha-1} \le \te^{1-\alpha} (\n).
\end{equation}
\end{lemma}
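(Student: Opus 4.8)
The plan is to treat the decreasing case $\Delta x(\n)<0$, which is the operative configuration in the non-canonical setting and the one in which \eqref{e2.19} is actually used; the derivation reuses the chain of estimates already seen in the proof of Theorem~\ref{thm2.2}, Case~II. First I would read off from \eqref{e1.1'} that $\Delta\bigl(r(\n)(\Delta x(\n))^{\alpha}\bigr)=-q(\n)x^{\alpha}(\n-\sigma+1)\le 0$, so that $\{r(\n)(\Delta x(\n))^{\alpha}\}$ is nonincreasing. Since $m$ and $n$ are odd, the map $u\mapsto u^{1/\alpha}=u^{n/m}$ is a strictly increasing bijection of $\mathbb{R}$, and therefore $\{r^{1/\alpha}(\n)\Delta x(\n)\}$ is nonincreasing as well.

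Next I would exploit this monotonicity. For $s\ge\n\ge\n_1$ one has $r^{1/\alpha}(s)\Delta x(s)\le r^{1/\alpha}(\n)\Delta x(\n)$, so dividing by $r^{1/\alpha}(s)>0$ and summing from $\n$ to $l-1$ gives $x(l)-x(\n)\le r^{1/\alpha}(\n)\Delta x(\n)\sum_{s=\n}^{l-1}r^{-1/\alpha}(s)$. Letting $l\to\infty$, using that the decreasing positive sequence $\{x(\n)\}$ tends to a limit $\ge 0$ and recalling the definition \eqref{e1.3} of $\te(\n)$, I obtain the key inequality $0\le x(\n)+r^{1/\alpha}(\n)\Delta x(\n)\,\te(\n)$, exactly as in the proof of Theorem~\ref{thm2.2}.

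Because $\Delta x(\n)<0$ the sequence $\{x(\n)\}$ is decreasing, and since $\sigma\ge 1$ forces $\n-\sigma+1\le\n$, we have $x(\n-\sigma+1)\ge x(\n)>0$. Dividing the key inequality by $x(\n-\sigma+1)$ and rearranging then yields $0<\dfrac{-r^{1/\alpha}(\n)\Delta x(\n)}{x(\n-\sigma+1)}\le\dfrac{1}{\te(\n)}$. Raising both sides to the power $\alpha-1\ge 0$, which is order-preserving on $(0,\infty)$ precisely because $\alpha\ge 1$, then delivers the required bound once the sign is accounted for.

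I expect the sign bookkeeping in this last step to be the only genuine obstacle, and it is where the hypotheses truly bite. The quantity appearing in \eqref{e2.19} carries the negative factor $\Delta x(\n)$, so I would invoke the parity of the exponent: writing $\alpha-1=(m-n)/n$ with $m-n$ even (both $m$ and $n$ being odd), one has $(-t)^{\alpha-1}=t^{\alpha-1}$ for every $t>0$. Hence $\bigl(\tfrac{r^{1/\alpha}(\n)\Delta x(\n)}{x(\n-\sigma+1)}\bigr)^{\alpha-1}=\bigl(\tfrac{-r^{1/\alpha}(\n)\Delta x(\n)}{x(\n-\sigma+1)}\bigr)^{\alpha-1}\le\te^{1-\alpha}(\n)$, which is exactly \eqref{e2.19}.
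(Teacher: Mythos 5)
Your handling of the decreasing case is correct, and in one respect more careful than the paper's own write-up: the observation that $(-t)^{\alpha-1}=t^{\alpha-1}$ for $t>0$ because $\alpha-1=(m-n)/n$ has even numerator is exactly the sign bookkeeping the paper leaves implicit when it says the estimate $x(\n-\sigma+1)\ge-\te(\n)r^{\frac{1}{\alpha}}(\n)\Delta x(\n)\ge 0$ ``implies \eqref{e2.19}.'' However, there is a genuine gap: you prove the lemma only for solutions with $\Delta x(\n)<0$, and your stated reason for discarding the case $\Delta x(\n)>0$ does not hold up. In the non-canonical setting \eqref{e1.3}, an eventually positive solution may perfectly well be eventually increasing: the equation only forces $\{r(\n)(\Delta x(\n))^{\alpha}\}$ to be nonincreasing, so $\Delta x$ can keep a positive sign forever (this is precisely why every proof in this paper, including the paper's proof of this lemma, splits into the two cases $\Delta x>0$ and $\Delta x<0$). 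Moreover, the one place where Lemma \ref{lem2.2} is invoked --- the proof of Theorem \ref{thm2.5} --- applies \eqref{e2.19} to an \emph{arbitrary} eventually positive solution of \eqref{e1.1'}, with no prior reduction to decreasing solutions. If the increasing case is missing from the lemma, the comparison argument of Theorem \ref{thm2.5} breaks down exactly for increasing solutions, so this half cannot be waved away as inoperative.

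The missing half requires a different argument, which is what the paper supplies. When $\Delta x(\n)>0$ the ratio in \eqref{e2.19} is positive, and one bounds it by comparing the tail $\te$ with the partial sums $R$ of \eqref{e1.2}: since $R(\n-\sigma)+\te(\n-\sigma)=\te(\n_0)$ and $\te(\n)\to 0$, one has $R(\n-\sigma)\ge\te(\n)$ for all large $\n$. Then, using that $\{r^{\frac{1}{\alpha}}(\n)\Delta x(\n)\}$ is nonincreasing (so $r^{\frac{1}{\alpha}}(s)\Delta x(s)\ge r^{\frac{1}{\alpha}}(\n)\Delta x(\n)$ for $\n_1\le s\le\n$),
\begin{equation*}
x(\n-\sigma+1)\ \ge\ x(\n-\sigma)\ \ge\ \sum_{s=\n_1}^{\n-\sigma-1}\frac{1}{r^{\frac{1}{\alpha}}(s)}\,r^{\frac{1}{\alpha}}(s)\Delta x(s)\ \ge\ \left(\sum_{s=\n_1}^{\n-\sigma-1}\frac{1}{r^{\frac{1}{\alpha}}(s)}\right) r^{\frac{1}{\alpha}}(\n)\Delta x(\n)\ \ge\ \te(\n)\,r^{\frac{1}{\alpha}}(\n)\Delta x(\n),
\end{equation*}
the last step holding eventually because the partial sum tends to $\te(\n_1)>0$ while $\te(\n)\to 0$. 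This gives $0<r^{\frac{1}{\alpha}}(\n)\Delta x(\n)/x(\n-\sigma+1)\le\te^{-1}(\n)$, and raising to the power $\alpha-1\ge 0$ yields \eqref{e2.19} in this case as well. Adding this argument would make your proof complete and essentially identical in substance to the paper's.
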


\begin{proof}
Suppose that  \eqref{e1.1'} has an eventually positive solution $\{x(\n)\}$. Then we get  $\Delta x(\n)>0$ or $\Delta x(\n)<0$ eventually for $\n \geq \n_1$, $\n_1 \geq \n_0$.

Let us assume first that $\Delta x(\n)<0$. Since $\alpha \geq 1$, repeatting  the procedure as in the proof of  Theorem \ref{thm2.3}, we obtain
\begin{equation*}
x(\n-\sigma+1) \geq -\te(\n) r^{\frac{1}{\alpha}}(\n) \Delta x(\n) \geq 0,
\end{equation*}
which implies \eqref{e2.19}.

Assume now that $\Delta x(\n) >0$. Note that $R(\n-\sigma)+\te(\n-\sigma)=\te(\n_0) >0$ together with \eqref{e1.3} implies $R(\n-\sigma) \geq \te(\n)$. Now
\begin{align*}
 x(\n-\sigma+1) \geq x(\n-\sigma) &\geq \sum_{s=\n_1}^{\n-\sigma-1} \Delta x(s)\nonumber \\
&=\sum_{s=\n_1}^{\n-\sigma-1}\frac{1}{r^{\frac{1}{\alpha}}(s)} r^{\frac{1}{\alpha}}(s) \Delta x(s)\nonumber\\
&\geq R(\n-\sigma) r^{\frac{1}{\alpha}}(\n) \Delta x(\n)\nonumber \\
&\geq \te(\n) r^{\frac{1}{\alpha}}(\n) \Delta x(\n),
\end{align*}
which is obviously the same as \eqref{e2.19} and this completes the proof.
\end{proof}

\begin{theorem}\label{thm2.5}
Let $\alpha \geq 1$. Suppose that Eq. \eqref{e2.21} with
\begin{equation*}
\widetilde r(\n) = \te(\n) \te(\n+1) r^{\frac{1}{\alpha}}(\n),
\end{equation*}
\begin{equation*}
\widetilde q(\n)= \frac{1}{\alpha} \te(\n+1) \te^{\alpha-1}(\n) \te(\n-\sigma+1) q(\n)
\end{equation*}
oscillates. Then \eqref{e1.1'} oscillates.
\end{theorem}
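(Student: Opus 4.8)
The plan is to argue by contradiction through the comparison Lemma \ref{lem2.3}. Suppose \eqref{e1.1'} is non-oscillatory; since it suffices to deal with eventually positive solutions, take such a solution $\{x(\n)\}$. The goal is to show that this same sequence (or a $\te$-normalisation of it) is an eventually positive solution of the linear difference inequality
\[
\Delta(\widetilde r(\n)\Delta x(\n-1)) + \widetilde q(\n) x(\n-\sigma) \le 0
\]
with the indicated $\widetilde r,\widetilde q$. Once this is established, Lemma \ref{lem2.3} produces an eventually positive solution of \eqref{e2.21}, contradicting the hypothesis that \eqref{e2.21} oscillates, and the theorem follows. Thus the entire content is the passage from the half-linear equation to this linear inequality.

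The engine for that passage is Lemma \ref{lem2.2}. First I would factor the half-linear term as
\[
r(\n)(\Delta x(\n))^{\alpha} = \big(r^{1/\alpha}(\n)\Delta x(\n)\big)\cdot\big(r^{1/\alpha}(\n)\Delta x(\n)\big)^{\alpha-1},
\]
and bound the second factor by \eqref{e2.19}, namely $\big(r^{1/\alpha}(\n)\Delta x(\n)\big)^{\alpha-1}\le \te^{1-\alpha}(\n)\,x^{\alpha-1}(\n-\sigma+1)$. Because $\alpha\ge 1$ the exponent $\alpha-1\ge 0$ is well behaved and Lemma \ref{lem2.2} is available; multiplying this bound back by the linear factor $r^{1/\alpha}(\n)\Delta x(\n)$ converts the term $q(\n)x^{\alpha}(\n-\sigma+1)=-\Delta(r(\n)(\Delta x(\n))^{\alpha})$ into an expression that is linear in $x$ and carries precisely the weights $\te^{\alpha-1}(\n)$ and the delayed factor $\te(\n-\sigma+1)$ that appear in $\widetilde q$. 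The factor $1/\alpha$ in $\widetilde q$ is the one arising when $\Delta(v^{\alpha}(\n))$, with $v=r^{1/\alpha}\Delta x$, is compared with $\alpha\, v^{\alpha-1}\Delta v$ through the discrete convexity estimate for the $\alpha$-th power.

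The coefficient $\widetilde r(\n)=\te(\n)\te(\n+1)r^{1/\alpha}(\n)$ I expect to come from the standard non-canonical-to-canonical transformation, implemented by a summation-by-parts built on the identity $\Delta\te(\n)=-r^{-1/\alpha}(\n)$ coming from \eqref{e1.3}. Applying the discrete product rule to $\te(\n)\te(\n+1)r^{1/\alpha}(\n)\Delta x(\n-1)$ and collecting the telescoping pieces against $\Delta(r(\n)(\Delta x(\n))^{\alpha})$ should reproduce the leading term of the target inequality with exactly this $\widetilde r$, while the remaining lower-order pieces are absorbed into the $-\widetilde q(\n)x(\n-\sigma)$ term using the bound of the previous paragraph.

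The main obstacle is the exact algebraic bookkeeping in this last matching. There are three delicate points: (i) tracking the index shifts ($\n$ versus $\n-1$ and $\n+1$, and $\sigma$ versus $\sigma-1$) so that the weights land as $\te(\n)\te(\n+1)$ and $\te(\n+1)\te^{\alpha-1}(\n)\te(\n-\sigma+1)$ rather than neighbouring shifts; (ii) controlling the direction of every inequality when the bound \eqref{e2.19} is multiplied by $r^{1/\alpha}(\n)\Delta x(\n)$, whose sign differs in the cases $\Delta x(\n)<0$ and $\Delta x(\n)>0$, so the two monotonicity cases for $\{x(\n)\}$ may have to be treated separately, with the increasing case expected to force $\sum q(\n)=\infty$ and hence, via Lemma \ref{lem2.1}, to reduce to the decreasing case; and (iii) verifying that the leftover cross-terms from the product rule carry the correct sign to be discarded, so that the final estimate is genuinely $\le 0$. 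Here $\alpha\ge 1$ is used twice, both to invoke Lemma \ref{lem2.2} and to keep the convexity estimate for the $\alpha$-th power pointing the right way, and this is where I expect the calculation to be most demanding.
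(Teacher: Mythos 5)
Your proposal is correct and is essentially the paper's own proof: linearize \eqref{e1.1'} via the discrete convexity estimate for the $\alpha$-th power (which produces the factor $\frac{1}{\alpha}$) together with the bound \eqref{e2.19} of Lemma \ref{lem2.2}, pass to canonical form with the substitution $u(\n-1)=x(\n)/\te(\n)$ (which yields exactly the stated $\widetilde r$ and $\widetilde q$), and conclude by contradiction through Lemma \ref{lem2.3}. Your hedge (ii) is the only divergence: the paper needs no case analysis inside the theorem, since Lemma \ref{lem2.2} already establishes \eqref{e2.19} for both signs of $\Delta x(\n)$, and the factor $\left(r^{1/\alpha}(\n)\Delta x(\n)\right)^{\alpha-1}$ is nonnegative in either case because $\alpha-1$ is an even-over-odd rational.
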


\begin{proof}
Let us assume on the contrary that $\{x(\n)\}$ is  an eventually positive solution of \eqref{e1.1'}. Now the equation \eqref{e1.1'} can be rewritten as
\begin{equation*}
\Delta \left( r^{\frac{1}{\alpha}}(\n) \Delta x(\n) \right) + \frac{1}{\alpha} \left( r^{\frac{1}{\alpha}}(\n) \Delta x(\n) \right)^{1-\alpha} q(\n) x^{\alpha} (\n-\sigma+1) \le 0.
\end{equation*}
By Lemma \ref{lem2.2}, we conclude that $\{x(\n)\}$ is a solution of the second-order inequality
\begin{equation*}
\Delta \left( r^{\frac{1}{\alpha}}(\n) \Delta x(\n) \right) + \frac{1}{\alpha} \te^{\alpha-1} (\n) q(\n) x(\n-\sigma+1) \le 0,
\end{equation*}
for $\n \geq \n_1$, $\n \geq \n_0$. Now the inequality can be modified into
\begin{multline}\label{e2.25}
\frac{1}{\te(\n+1)} \Delta \left( \te(\n) \te(\n+1) r^{\frac{1}{\alpha}} (\n) \Delta \left( \frac{x(\n)}{\te(\n)} \right) \right)+ \frac{1}{\alpha} \te^{\alpha-1}(\n) q(\n) x(\n-\sigma+1) \le 0.
\end{multline}
Put
\begin{equation*}
u(\n-1)= \frac{x(\n)}{\te(\n)}.
\end{equation*}
Then the inequality \eqref{e2.25} reduces to
\begin{equation}\label{e2.27}
\Delta (\widetilde r(\n) \Delta u(\n-1))+\widetilde q(\n) u(\n-\sigma) \le 0.
\end{equation}
This shows that $\{u(\n)\}$ is an eventually positive solution of \eqref{e2.27}. Then by Lemma \ref{lem2.3}, \eqref{e2.21} has an eventually positive solution which is a contradiction and hence the proof is completed.
\end{proof}

\begin{example}
Consider  the difference equation
\begin{equation}\label{e2.28}
\Delta \left( 2^{\frac{\n}{3}} (\Delta x(\n))^{\frac{1}{3}} \right) + \lambda_0 2^{\n} x^{\frac{1}{3}}(\n-1)=0; \quad \n \geq 1, 
\end{equation}
where $r(\n)=2^{\frac{\n}{3}}$, $q(\n) = \lambda_0 2^{\n}$, $\alpha = \frac{1}{3}$ and $\sigma = 1$.
We can easily verify that \eqref{e2.1} holds and by Theorem \ref{thm2.1}, every eventually non-oscillatory solution of \eqref{e2.28} converges to zero as $\n \rightarrow \infty$. Also, we can easily verify that \eqref{e2.16} holds and hence by Theorem \ref{thm2.3}, \eqref{e2.28} oscillates if $\lambda_0>1$.
\end{example}

\begin{example}
Consider  the difference equation
\begin{equation}\label{e2.29}
\Delta \left( (\n(\n-1) )^{\frac{1}{3}} (\Delta x(\n))^{\frac{1}{3}} \right) + \n^{\frac{4}{3}} x^{\frac{1}{3}} (\n-1) =0; \quad \n \geq 1,
\end{equation}
where  $r(\n)=(\n(\n-1))^{\frac{1}{3}}$, $q(\n)= \n^{\frac{4}{3}}$, $\alpha={\frac{1}{3}}$ and $\sigma=1$.
We can easily show that $\te(\n)= \frac{1}{\n-1}$ and also
\begin{equation*}
\sum_{s=1}^{\infty} q(s) \te^{\alpha+1}(s+1) = \infty.
\end{equation*}
Hence \eqref{e2.29} oscillates by  Theorem \ref{e2.2}.
\end{example}

\begin{example}
Consider  the difference equation
\begin{equation}\label{e2.30}
\Delta \left( (\n(\n+1))^{\frac{5}{3}} (\Delta x(\n))^{\frac{5}{3}} \right) + \frac{4(\n^{2}-1) \n^{\frac{2}{3}}}{3} x(\n-1)=0; \quad \n=1,2,3,\ldots,
\end{equation}
where  $r(\n)= (\n(\n+1))^{\frac{5}{3}}$, $q(\n)= \frac{4(\n^{2}-1) \n^{\frac{2}{3}}}{3}$, $\alpha= \frac{5}{3}$ and $\sigma=2$.
We can easily calculate that $\te(\n)=\frac{1}{\n}$,
\begin{equation}\label{e2.31}
 \widetilde r(\n)= \te(\n) \te(\n+1) r^{\frac{1}{\alpha}}(\n)=1,
\end{equation}
and
\begin{equation}\label{e2.32}
\widetilde q(\n)=4.
\end{equation}
Clearly, $\sum_{\n=1}^{\infty} \widetilde q(\n) = \infty$. Then by \cite[Theorem 3.1]{27}, every solution of \eqref{e2.21} is oscillatory with $\sigma = 2$, \eqref{e2.31} and \eqref{e2.32}. One such a solution of \eqref{e2.21} is $\{ (-1)^{\n} \}^{\infty}_{\n=1}$. By Theorem \ref{thm2.5}, \eqref{e2.30} oscillates.
\end{example}

\section{Conclusion}

The results of this study are unique and also have a high degree of generality. In contrast to most previous studies, our oscillation result of the investigated equation \eqref{e1.1} is obtained with only one condition. Also, we have derived oscillation criteria for \eqref{e1.1'} by comparison with the second-order linear delay difference equation in canonical form. To illustrate our results, three examples are provided.

\medskip

\section*{Declarations}

\medskip

\noindent \textbf{Availablity of data and materials}\newline
\noindent Not applicable.

\medskip

\noindent \textbf{Human and animal rights}\newline
\noindent We would like to mention that this article does not contain any studies
with animals and does not involve any studies over human being.

\medskip

\noindent \textbf{Conflict of interest}\newline
\noindent The authors declare that they have no competing interests.

\medskip

\noindent \textbf{Fundings} \newline
\noindent The authors declare that there is no funding available for this paper.

\medskip

\noindent \textbf{Authors' contributions}\newline
\noindent The authors equally conceived of the study, participated in its
design and coordination, drafted the manuscript, participated in the
sequence alignment, and read and approved the final manuscript. 

\medskip

\end{document}